\documentclass[11pt]{article}

\usepackage[a4paper,margin=1in]{geometry}

\usepackage{amsmath,amssymb,amsthm,mathtools,bm}
\usepackage{booktabs,array}
\usepackage{enumitem}
\usepackage[numbers,sort&compress]{natbib}
\usepackage[hidelinks]{hyperref}
\usepackage[nameinlink,noabbrev]{cleveref}
\usepackage{microtype}
\usepackage{xcolor}

\theoremstyle{plain}
\newtheorem{theorem}{Theorem}[section]
\newtheorem{proposition}[theorem]{Proposition}
\newtheorem{lemma}[theorem]{Lemma}
\newtheorem{corollary}[theorem]{Corollary}

\theoremstyle{definition}

\theoremstyle{remark}

\crefname{theorem}{Theorem}{Theorems}
\crefname{lemma}{Lemma}{Lemmas}
\crefname{proposition}{Proposition}{Propositions}
\crefname{corollary}{Corollary}{Corollaries}
\crefname{conjecture}{Conjecture}{Conjectures}
\crefname{remark}{Remark}{Remarks}
\crefname{definition}{Definition}{Definitions}
\crefname{example}{Example}{Examples}
\crefname{section}{Section}{Sections}
\crefname{equation}{Equation}{Equations}
\crefname{table}{Table}{Tables}
\crefname{figure}{Figure}{Figures}

\newcommand{\R}{\mathbb{R}}

\newcommand{\diag}{\operatorname{diag}}
\newcommand{\rank}{\operatorname{rank}}
\newcommand{\nullity}{\operatorname{nullity}}

\newcommand{\sspec}{\operatorname{sspec}}

\newcommand{\T}{\mathsf{T}}

\title{\bfseries Converse symplectic interlacing
via low-rank singular value perturbations}

\author{
Priyanka Grover$^{1}$\thanks{\texttt{priyanka.grover@snu.edu.in}}
\and
Hemant K. Mishra$^{2}$\thanks{\texttt{hemantmishra1124@iitism.ac.in}}
\and
Sarishti Singh$^{3,4}$\thanks{\texttt{sarishtisingh2510@gmail.com}}
\and
Temjensangba$^{2}$\thanks{\texttt{temjensangba111@gmail.com}}
}

\date{
\small
$^{1}$Department of Mathematics, Shiv Nadar Institution of Eminence, Delhi NCR, India\\
$^{2}$Department of Mathematics and Computing, Indian Institute of Technology (ISM) Dhanbad, Jharkhand, India\\
$^{3}$D\'epartement de math\'ematiques et de statistique, Universit\'e Laval, G1V 0A6, QC, Canada\\
$^{4}$D\'epartement \'Education, Universit\'e T\'ELUQ, Qu\'ebec, G1K 9H6, QC, Canada
}

\begin{document}
\maketitle

\begin{abstract} 
    The symplectic interlacing theorem states that the symplectic spectrum of a $2n \times 2n$ real symmetric positive definite matrix satisfies a $2k$-step interlacing relation with the symplectic spectrum of every $2(n-k) \times 2(n-k)$ $s$-principal submatrix.
    We prove the converse in this paper.
    More precisely, given a positive global symplectic spectrum of size $n$ and a positive local symplectic spectrum of size $n-k$ satisfying the $2k$-step interlacing inequalities, we construct a $2n \times 2n$ real symmetric positive definite matrix having the prescribed global symplectic spectrum and a $2(n-k) \times 2(n-k)$ $s$-principal submatrix of it having the prescribed local symplectic spectrum.  
    Our main tool is a reduction of $s$-principal compression to a rank-$k$ singular value perturbation.  
    In the context of the Gaussian quantum marginal problem, our result implies that the symplectic interlacing inequalities are not merely necessary conditions on Gaussian marginals, but are also the complete spectral compatibility conditions for the global-versus-local-mode marginal problem.
\end{abstract}

\medskip
\noindent\textbf{Keywords.}
Symplectic eigenvalues; interlacing theorem; positive definite matrices; low-rank perturbations; singular values; Williamson normal form; Gaussian quantum marginal problem.

\smallskip
\noindent\textbf{MSC 2020.}
15A18, 15A42, 15A45, 81P45.

\bigskip
\section{Introduction}
 
The symplectic spectral theorem due to Williamson \cite{Williamson1936} states that every $2n \times 2n$ real symmetric positive definite matrix $A$ can be brought to a diagonal form $\Gamma \oplus \Gamma$ via symplectic congruence transformation, where $\Gamma \coloneqq \diag(\mu_1,\ldots,\mu_n)$ and $0<\mu_1\leq\cdots\leq\mu_n$.
The numbers $\mu_1,\ldots,\mu_n$ are the symplectic eigenvalues of $A$.  Symplectic eigenvalues occur naturally in Hamiltonian dynamics, quantum mechanics, symplectic geometry, and continuous-variable quantum information theory. 
In the last decade, several results for symplectic eigenvalues have been found to be analogous to familiar results for eigenvalues of Hermitian matrices.  
These include variational principles, Weyl and Lidskii type inequalities, Schur--Horn type theorems, perturbation formulas, and Horn type descriptions; see \cite{BhatiaJain2015,BhatiaJainSchurHorn2020,BhatiaJainVariational2021,Mishra2020,Jain2021,JainMishra2022,Paradan2022}.  

A symplectic analog of Cauchy's interlacing theorem was obtained by Krbek, Tyc, and Vlach \cite{KrbekTycVlach2014} in the study of a Gaussian quantum marginal problem.  Consider an $n$-mode bosonic Gaussian state with covariance matrix $A$ and global symplectic eigenvalues
\[
  \mu_1\leq\cdots\leq\mu_n.
\]
If $k$ modes are discarded, the covariance matrix of the remaining $(n-k)$ modes is an $s$-principal submatrix $B$ of $A$.  
If
\[
  d_1\leq\cdots\leq d_{n-k}
\]
are the symplectic eigenvalues of $B$, then Krbek, Tyc, and Vlach proved the necessary inequalities
\begin{equation}\label{eq:gaussian-k-ineq}
    \begin{aligned}
      \mu_j&\leq d_j\leq\mu_{j+2k}, &&1\leq j\leq n-2k,\\
      \mu_j&\leq d_j, &&n-2k<j\leq n-k.
    \end{aligned}
    \end{equation}
When $n-2k\leq0$, only the second family is present.
They also proved a stronger interlacing relation for passive Gaussian transformations and established the converse in that restricted setting.  
They provided some numerical evidence in support of the converse for the special case $k=1$ and stated it as an open problem.   

In this paper, we establish converse of the symplectic interlacing theorem in full generality.
More concretely, given two spectra
\[
  0<\mu_1\leq\cdots\leq\mu_n,
  \qquad
  0<d_1\leq\cdots\leq d_{n-k}
\]
satisfying \eqref{eq:gaussian-k-ineq}, we provide an explicit construction of a $2n\times2n$ real symmetric positive definite matrix $A$ whose symplectic eigenvalues are $\mu_1,\ldots,\mu_n$ and which has a $2(n-k)\times2(n-k)$ $s$-principal submatrix with symplectic eigenvalues $d_1,\ldots,d_{n-k}$.
Interestingly, the realizing matrix can be chosen in a particularly simple form.  If
\[
  \Lambda\coloneqq \diag(\mu_1,\ldots,\mu_n),
\]
then there exists $X\in\mathrm{GL}_n(\R)$, where $\mathrm{GL}_n(\R)$ denotes the general linear group of invertible $n\times n$ real matrices, such that
\begin{equation}\label{eq:intro-point-orbit}
  A_X\coloneqq (X\Lambda X^{\T})\oplus(X^{-\T}\Lambda X^{-1})
\end{equation}
has the prescribed global and local symplectic spectra.
 Following the terminology used in \cite{KrbekTycVlach2014}, such symplectic transformations are point transformations.
 We denote by $M_{m \times n}(\mathbb{R})$ the set of $m \times n$ real matrices, and we use the notation $M_n(\mathbb{R}) \coloneqq M_{n \times n}(\mathbb{R})$.

The key to proving the main result is the following core technical result established in the paper.
The symplectic spectrum of an $s$-principal submatrix obtained by deleting $k$ canonical pairs of indices from a point transformation is encoded in the singular values of an $n\times n$ defect matrix
\[
  M\coloneqq\Lambda-UV^{\T},
  \qquad U,V\in M_{n \times k}(\mathbb{R}),
  \qquad V^{\T}\Lambda^{-1}U=I_k.
\]
The non-zero singular values of $M$ are exactly the symplectic eigenvalues of the retained $s$-principal submatrix.  This reduces the inverse symplectic interlacing problem to a restricted-rank singular value reassignment problem.

In the context of Gaussian quantum information theory, the converse has a direct physical interpretation for multimode electromagnetic fields. The symplectic eigenvalues of a Gaussian covariance matrix can be identified, up to the usual normalization, with the mean photon numbers of the thermal modes appearing in its Williamson normal form. Consequently, our result provides a complete characterization of the possible relations between the average photon numbers of the thermal modes of the global electromagnetic-field system and those of a subsystem obtained by discarding one or more modes. In particular, every collection of subsystem mean photon numbers satisfying the symplectic interlacing inequalities can be realized by a Gaussian state with the prescribed global mean photon numbers.

The organization of the paper is as follows.
In Section~\ref{sec:prelim}, we review a basic symplectic spectral framework, define $s$-principal submatrices, and describe point-symplectic orbits.
We establish some results related to \emph{s}-principal compression and low-rank perturbations of matrices in Section~\ref{sec:s-compression}.
The main result is presented in Section~\ref{sec:main_result}.
Finally, we provide an application of our main result in Section~\ref{sec:gaussian-application}.

\section{Symplectic eigenvalues and $s$-principal submatrices}\label{sec:prelim}

Let $J\coloneqq \begin{bsmallmatrix}
    0&I_n\\-I_n&0
\end{bsmallmatrix}$ be the standard symplectic matrix, where $I_n$ is the identity matrix of size $n \times n$.
The real symplectic group is
\begin{equation*}
    \mathrm{SP}_{2n}(\R)\coloneqq \{S\in M_{2n}(\mathbb{R}):S^{\T}JS=J\}.
\end{equation*}

For a symmetric positive definite matrix $A\in M_{2n}(\mathbb{R})$, Williamson's theorem \cite{Williamson1936,SimonChaturvediSrinivasan1999} gives $S \in \mathrm{SP}_{2n}(\R)$ such that
\begin{equation}\label{eq:Williamson}
  S^{\top}AS=\Gamma\oplus\Gamma,
  \qquad \Gamma\coloneqq \diag(\mu_1,\ldots,\mu_n).
\end{equation}
We write
\[
  \sspec(A)\coloneqq(\mu_1,\ldots,\mu_n),
  \qquad 0<\mu_1\leq\cdots\leq\mu_n,
\]
for the ordered symplectic spectrum of $A$.  Equivalently, the eigenvalues of $JA$ are $\pm i\mu_j$ \cite{JainMishra2022}.

We next describe the principal submatrices relevant to symplectic interlacing.  Let
\[
  I\coloneqq\{i_1<\cdots<i_r\}\subseteq\{1,\ldots,n\}
\]
and let
\[
  E_I\coloneqq[e_{i_1}\ \cdots\ e_{i_r}]\in M_{n \times r}(\mathbb{R}),
\]
where $e_j$ is the $j$th standard basis vector of $\R^n$.  Set
\[
  \mathcal E_I\coloneqq E_I\oplus E_I\in M_{2n \times 2r}(\mathbb{R}).
\]
The matrix
\begin{equation}\label{eq:sprincipal}
  A_I\coloneqq \mathcal E_I^{\T}A\mathcal E_I
\end{equation}
is called the $s$-principal submatrix of $A$ associated with $I$.  Thus an $s$-principal submatrix is obtained by retaining the same set of indices in the two $n\times n$ coordinate blocks determined by $J$.  In particular, deleting one mode means deleting the same one index from each of these two blocks.

A special class of symplectic matrices will play an important role.  Given $X\in\mathrm{GL}_n(\R)$, set
\begin{equation}\label{eq:point-symp}
  \mathcal S_X\coloneqq X\oplus X^{-\T}.
\end{equation}
A direct calculation gives $\mathcal S_X^{\T}J\mathcal S_X=J$.  These transformations do not mix the two $n$-dimensional coordinate sectors.
For an $n \times n$ positive diagonal matrix $\Lambda$, we define $A_X$ as
\begin{equation}\label{eq:GammaX}
  A_X\coloneqq \mathcal S_X(\Lambda\oplus\Lambda)\mathcal S_X^{\T}
  =(X\Lambda X^{\T})\oplus(X^{-\T}\Lambda X^{-1}).
\end{equation}
We call the family of matrices $A_X$, as $X$ ranges over $\mathrm{GL}_n(\R)$, the \emph{point-symplectic orbit} of $\Lambda\oplus\Lambda$.  Every matrix on this orbit has symplectic spectrum equal to the diagonal of $\Lambda$.  One of our main results shows that this restricted orbit is already sufficient to realize every $s$-principal spectrum allowed by the symplectic interlacing inequalities.

\section{Symplectic spectrum of $s$-principal compression and singular values of low-rank perturbation} \label{sec:s-compression}

    The main aim of this section is to establish correspondence between the symplectic spectrum of an $s$-principal submatrix and the singular values of a low-rank perturbation of a matrix corresponding to the given matrix.
    This allows us to leverage the classical results of singular values interlacing inequalities given in \cite{ChuChu2006}.

    We begin with a simple observation on block-diagonal positive definite matrices.  
    If $X$ is a matrix with real eigenvalues, we shall denote by $\lambda(X)$ the vector consisting of the eigenvalues of the matrix $X$ with the entries arranged in the non-decreasing order. 
    Let $\sigma(X)$ denote the vector of singular values of the matrix $X$ with its entries arranged in the non-decreasing order.

\begin{lemma}\label{lem:blockdiag}
    Let $P, Q \in M_n(\mathbb{R})$ be symmetric positive definite matrices. 
    The symplectic eigenvalues of $P \oplus Q$ are the positive square roots of the eigenvalues of $PQ$.
    Thus, the symplectic eigenvalues of $P \oplus Q$ are the singular values of $P^{1/2}Q^{1/2}$.
\end{lemma}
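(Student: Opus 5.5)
The plan is to use the characterization recalled in \cref{sec:prelim}: for a symmetric positive definite $A$, the eigenvalues of $JA$ are exactly $\pm i\mu_j$, where $\mu_j$ ranges over the symplectic eigenvalues of $A$. So it suffices to compute the spectrum of $J(P\oplus Q)$ and relate it to the spectrum of $PQ$.

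First, compute
\[
J(P\oplus Q)=\begin{bmatrix}0&Q\\-P&0\end{bmatrix},
\qquad
\bigl(J(P\oplus Q)\bigr)^2=\begin{bmatrix}-QP&0\\0&-PQ\end{bmatrix}.
\]
The squares of the eigenvalues of $J(P\oplus Q)$ are therefore the eigenvalues of $-(QP\oplus PQ)$, counted with multiplicity. Because $QP=P^{-1}(PQ)P$, this multiset is two copies of the eigenvalues of $-PQ$.

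Next, note that $PQ=P^{1/2}\bigl(P^{1/2}QP^{1/2}\bigr)P^{-1/2}$ is similar to the symmetric positive definite matrix $P^{1/2}QP^{1/2}$. Hence its eigenvalues $\nu_1\le\cdots\le\nu_n$ are real and positive. The eigenvalues $\pm i\mu_j$ of $J(P\oplus Q)$ square to $-\mu_j^2$, each such square appearing twice. Matching this multiset with the two copies of $\{-\nu_j\}$ and ordering both gives $\mu_j^2=\nu_j$, so $\mu_j=\sqrt{\nu_j}$. The one place needing care is the multiplicity bookkeeping: passing from $(JA)^2$ back to $JA$ must not lose or double count values. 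This works because the $2n$ eigenvalues of $JA$ are exactly the $\pm i\mu_j$, so the map $\lambda\mapsto\lambda^2$ sends them to the multiset $\{-\mu_j^2\}$, each value taken twice.

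An alternative that avoids the squaring step is an explicit Williamson reduction. Let $P^{1/2}QP^{1/2}=W\,\diag(\nu)\,W^{\T}$ with $W$ orthogonal, and put $X=P^{-1/2}W\,\diag(\nu)^{1/4}$. The point-symplectic matrix $\mathcal S_{X}$ from \eqref{eq:point-symp} then satisfies
\[
\mathcal S_{X}^{\T}(P\oplus Q)\,\mathcal S_{X}=\diag(\nu)^{1/2}\oplus\diag(\nu)^{1/2},
\]
which is a direct check. By uniqueness in \eqref{eq:Williamson}, this confirms $\mu_j=\sqrt{\nu_j}$.

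For the final assertion, write $P^{1/2}Q^{1/2}\bigl(P^{1/2}Q^{1/2}\bigr)^{\T}=P^{1/2}QP^{1/2}$. Its eigenvalues are the $\nu_j$ by the similarity above, so the singular values of $P^{1/2}Q^{1/2}$ are $\sqrt{\nu_j}=\mu_j$.
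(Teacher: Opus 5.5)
Your proposal is correct and follows essentially the same route as the paper: square $J(P\oplus Q)$ to get $-(QP\oplus PQ)$, use the similarity of $PQ$ to $P^{1/2}QP^{1/2}$ for positivity and the multiplicity count, and finish with $P^{1/2}Q^{1/2}(P^{1/2}Q^{1/2})^{\T}=P^{1/2}QP^{1/2}$. Your alternative explicit Williamson reduction via $\mathcal S_X$ with $X=P^{-1/2}W\diag(\nu)^{1/4}$ is also valid; it additionally exhibits the diagonalizing point-symplectic matrix, which the paper's proof does not need.
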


\begin{proof}
First, we observe that $PQ$ is similar to the symmetric positive definite matrix $P^{1/2} Q P^{1/2}$.
Thus, all the eigenvalues of $PQ$ are positive.
We have
\begin{equation*}
\left(i J (P \oplus Q) \right)^2 = QP \oplus PQ .
\end{equation*}
This means that the eigenvalues of $(i J (P \oplus Q))^2$ are the eigenvalues of $PQ$, each counted twice.
It immediately follows that the squares of the symplectic eigenvalues of $P \oplus Q$ are the eigenvalues of $PQ$.
This proves the first part.
The second part follows by observing that
\begin{equation*}
\sspec(P \oplus Q) = \sqrt{\lambda(PQ)}
              = \sqrt{\lambda \left( P^{1/2} Q P^{1/2} \right)}
              = \sqrt{\lambda \left( (P^{1/2} Q^{1/2}) (P^{1/2} Q^{1/2})^\top \right)}
              = \sigma \left( P^{1/2} Q^{1/2} \right). 
\end{equation*}
\end{proof}

Let $\Lambda \in M_n(\mathbb{R})$ be a positive diagonal matrix and let $X \in \mathrm{GL}_n(\R)$.
Let $A_X$ be the matrix as defined in \eqref{eq:GammaX}.
For any $k$ indices $1 \leq i_1 < \cdots < i_k \leq n$, delete the rows and columns of $X \Lambda X^\top$ and $X^{-\top} \Lambda X^{-1}$ corresponding to those $k$ indices.
If $m = n-k$, let the $2m \times 2m$ retained \emph{s}-principal submatrix be denoted by $B_X$.
Let $E \in M_{n \times m}(\mathbb{R})$ be the matrix whose columns are the standard basis vectors corresponding to the retained indices, and let $F \in M_{n \times k}(\mathbb{R})$ be the matrix whose columns are the standard basis vectors corresponding to the discarded indices.
Notice that
\begin{equation} \label{eq: relationship between E and F}
    EE^\top + FF^\top = I_n \text{ and } E^\top F = 0.
\end{equation}
Let 
\begin{equation}
   P_E\coloneqq E^{\top}X\Lambda X^{\top}E
  \text{ and }
  Q_E\coloneqq E^{\top}X^{-\top}\Lambda X^{-1}E
\end{equation}
so that
\begin{equation}\label{eq:marginal-PQ}
  B_X \coloneqq P_E \oplus Q_E.
\end{equation}
Define
\begin{equation}\label{eq:CD}
  C \coloneqq \Lambda^{1/2} X^{\top}E,
  \text{ and }
  D \coloneqq \Lambda^{1/2}X^{-1}E.
\end{equation}
We observe that $P_E = C^{\top}C$ and $Q_E=D^{\top}D$.

\begin{proposition} \label{prop:the connection}
With the notation above, the non-zero singular values of the $n\times n$ matrix
\begin{equation}\label{eq:M-defect-k}
  M_X\coloneqq CD^{\T}
\end{equation}
are exactly the symplectic eigenvalues of $B_X$.  Moreover,
\begin{equation}\label{eq:M-rankk}
  M_X=\Lambda-UV^{\T},
\end{equation}
where $U,V\in M_{n \times k}(\mathbb{R})$ are given by
\begin{equation}\label{eq:UV}
  U\coloneqq \Lambda^{1/2}X^{\T}F,
  \qquad
  V\coloneqq \Lambda^{1/2}X^{-1}F,
\end{equation}
and
\begin{equation}\label{eq:normalization-k}
  V^{\T}\Lambda^{-1}U=I_k.
\end{equation}
\end{proposition}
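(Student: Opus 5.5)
The proof has three parts. We first identify the singular values of $M_X$ with the symplectic spectrum of $B_X$ via \cref{lem:blockdiag}. Next we expand $\Lambda$ using the splitting $EE^\T+FF^\T=I_n$ from \eqref{eq: relationship between E and F}. Finally we verify the normalization \eqref{eq:normalization-k} by a direct computation.

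\textbf{Singular values.} By \eqref{eq:marginal-PQ} and \cref{lem:blockdiag}, the symplectic eigenvalues of $B_X=P_E\oplus Q_E$ are the positive square roots of the eigenvalues of $P_EQ_E=C^\T C\,D^\T D$. The nonzero eigenvalues of $C^\T C D^\T D$ coincide, with multiplicity, with the nonzero eigenvalues of $(CD^\T)(DC^\T)=M_XM_X^\T$, since $AB$ and $BA$ have the same nonzero eigenvalues; take $A=C^\T$ and $B=CD^\T D$. The matrix $P_EQ_E$ is $m\times m$ and invertible, because both $P_E$ and $Q_E$ are positive definite; here $C$ and $D$ have full column rank $m$ since $X$ is invertible and $E$ has orthonormal columns. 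Hence $P_EQ_E$ has $m$ nonzero eigenvalues. It follows that $M_XM_X^\T$ has exactly $m$ nonzero eigenvalues counted with multiplicity, and these equal the eigenvalues of $P_EQ_E$. Taking square roots gives the first claim. Moreover $M_X$ has rank $m$, so its remaining $k$ singular values vanish.

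\textbf{Rank-$k$ formula.} Write $\Lambda=\Lambda^{1/2}X^\T X^{-\T}\Lambda^{1/2}=\Lambda^{1/2}X^\T(EE^\T+FF^\T)X^{-\T}\Lambda^{1/2}$. The first term is
\[
\Lambda^{1/2}X^\T E\,(\Lambda^{1/2}X^{-1}E)^\T=CD^\T=M_X,
\]
and the second term is $UV^\T$ by the definitions in \eqref{eq:UV}. Hence $\Lambda=M_X+UV^\T$, which is \eqref{eq:M-rankk}.

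\textbf{Normalization.} Directly,
\[
V^\T\Lambda^{-1}U=F^\T X^{-\T}\Lambda^{1/2}\Lambda^{-1}\Lambda^{1/2}X^\T F=F^\T F=I_k .
\]

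No step is a serious obstacle. The only care point is the multiplicity bookkeeping in the singular-value step, where one must check that exactly $m$ singular values of $M_X$ are nonzero; this is where the invertibility of $P_EQ_E$ is used.
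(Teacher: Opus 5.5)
Your proposal is correct and follows essentially the same route as the paper: Lemma~\ref{lem:blockdiag} together with the $AB$/$BA$ eigenvalue correspondence gives the singular values, the splitting $EE^\T+FF^\T=I_n$ gives the rank-$k$ formula, and the same one-line computation gives the normalization. Your explicit use of the invertibility of $P_EQ_E$ to count exactly $m$ nonzero singular values is just a slightly more careful version of the paper's remark that the extra $n-m$ eigenvalues of $CD^\T DC^\T$ are zero.
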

Moreover, $\nullity M_X = k$.

\begin{proof}
From Lemma \ref{lem:blockdiag}, we have
\begin{equation} \label{eq:sspec-sigma-relation-1}
( \underbrace{0}_{k \text{ entries}}, \underbrace{\operatorname{sspec} (B_X)}_{m \text{ entries}})
= \left( 0, \sqrt{\lambda(P_E Q_E)} \right)
= \left( 0, \sqrt{\lambda(C^\top C D^\top D)} \right).
\end{equation}
We recall that if $M$ and $N$ are matrices of sizes $n_1 \times n_2$ and $n_2 \times n_1$ with $n_1 < n_2$, respectively, then all the $n_1$ eigenvalues of $MN$ are also eigenvalues of $NM$, and the remaining $n_2-n_1$ eigenvalues of $NM$ are all zeros.
Therefore, 
\begin{equation} \label{eq:sspec-sigma-relation-2}
\left( 0, \sqrt{\lambda(C^\top C D^\top D)}\right)
=  \sqrt{ \lambda \left( C D^\top D C^\top \right) }
= \sqrt{ \lambda \left( (CD^\top)(CD^\top)^\top \right) }
= \sigma \left( CD^\top \right).
\end{equation}
The relations \eqref{eq:sspec-sigma-relation-1} and \eqref{eq:sspec-sigma-relation-2} imply that the non-zero singular values of $M_X$ are the symplectic eigenvalues of $B_X$.

We have 
\begin{align*}
  M_X
  &=\Lambda^{1/2}X^{\T}EE^{\T}X^{-\T}\Lambda^{1/2}\\
  &=\Lambda^{1/2}X^{\T}(I-FF^{\T})X^{-\T}\Lambda^{1/2}\\
  &=\Lambda
    -\Lambda^{1/2}X^{\T}F F^{\T}X^{-\T}\Lambda^{1/2}\\
  &=\Lambda-UV^{\T},
\end{align*}
where $ U =  \Lambda^{1/2} X^{\top} F$ and $V = \Lambda^{1/2} X^{-1} F$ as desired.
It is easy to verify that
\[
 V^{\top}\Lambda^{-1}U
 =F^{\top}X^{-\top}\Lambda^{1/2}\Lambda^{-1}\Lambda^{1/2}X^{\top}F
 =F^{\top}F=I_k.
\]
Finally, $\nullity M_X = k$ follows from the fact that the number of non-zero singular values of a matrix equals its rank.
\end{proof}

The previous result reveals that the symplectic spectrum of an $s$-principal submatrix obtained by deleting $k$ canonical pairs on a point-symplectic orbit is encoded in the singular values of a rank-$k$ perturbation of the Williamson diagonal whose nullity is exactly $k$.
 The next lemma shows that the normalization \eqref{eq:normalization-k} appearing in the previous result is exactly what is needed to reconstruct a point transformation.

\begin{lemma}\label{lem:reconstruct-X}
Let $\Lambda>0$ be diagonal and let $U,V \in M_{n \times k}(\mathbb{R})$ have full column rank and satisfy
$V^{\top} \Lambda^{-1} U = I_k.$
Let $F \in M_{n \times k}(\mathbb{R})$ be any matrix whose columns are $k$ distinct standard basis vectors of $\mathbb{R}^n$.
Then there exists $X\in\mathrm{GL}_n(\R)$ such that
\begin{equation}
  U = \Lambda^{1/2} X^{\top} F,
  \qquad
  V = \Lambda^{1/2} X^{-1} F.
\end{equation}
\end{lemma}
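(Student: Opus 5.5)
Rewrite the two required identities in terms of $Y \coloneqq X^{\top}$. Setting $\tilde U \coloneqq \Lambda^{-1/2}U$ and $\tilde V \coloneqq \Lambda^{-1/2}V$, the conditions become
\[
  Y F = \tilde U, \qquad Y^{-1}F = \tilde V^{\,\prime},
\]
where the second identity comes from $X^{-1}F = \Lambda^{-1/2}V$. Note that $X^{-1} = (Y^{-1})^{\top}$ rather than $Y^{-1}$, so it is cleaner to work with $X$ directly.

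In terms of $X$, we need $X^{\top}F = \tilde U$ and $X^{-1}F = \tilde V$. The normalization reads $\tilde V^{\top}\tilde U = V^{\top}\Lambda^{-1}U = I_k$. The second condition says the columns of $\tilde V$ are the preimages under $X$ of the basis vectors indexed by $F$, that is, $X\tilde V = F$. The first says $F^{\top}X = \tilde U^{\top}$. So we seek an invertible $X$ with
\[
  X\tilde V = F, \qquad F^{\top}X = \tilde U^{\top}.
\]
These two linear constraints are compatible precisely because $F^{\top}X\tilde V$ equals both $F^{\top}F = I_k$ and $\tilde U^{\top}\tilde V = I_k$.

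The construction proceeds as follows. Let $E$ be the complementary coordinate matrix from \eqref{eq: relationship between E and F}, and decompose $\R^n = \operatorname{col}(\tilde V) \oplus \ker(\tilde U^{\top})$. This is a direct sum because $\tilde U^{\top}\tilde V = I_k$ implies that $\tilde V$ has full column rank and that $\operatorname{col}(\tilde V)\cap\ker\tilde U^{\top} = 0$. Here $\ker\tilde U^{\top}$ has dimension $n-k$, since $\tilde U$ has full rank. Choose a basis matrix $W\in M_{n\times m}(\R)$ of $\ker \tilde U^{\top}$. Then $[\tilde V\ W]$ is invertible. Define $X$ by
\[
  X[\tilde V\ W] = [F\ E], \qquad X \coloneqq [F\ E]\,[\tilde V\ W]^{-1}.
\]
This $X$ is invertible, as a product of two invertible matrices, and $X\tilde V = F$ holds by construction.

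It remains to verify $F^{\top}X = \tilde U^{\top}$. It suffices to check this on the basis $[\tilde V\ W]$. We have
\[
  F^{\top}X[\tilde V\ W] = F^{\top}[F\ E] = [I_k\ 0],
\]
and also
\[
  \tilde U^{\top}[\tilde V\ W] = [I_k\ 0],
\]
using the normalization and $W\subseteq\ker\tilde U^{\top}$. Since the two sides agree on a basis, they are equal, and unwinding gives $U = \Lambda^{1/2}X^{\top}F$ and $V = \Lambda^{1/2}X^{-1}F$.

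The only real obstacle is spotting this reformulation: the condition on $X^{-1}$ becomes the linear condition $X\tilde V = F$, and the normalization supplies both the needed direct sum and the consistency check. Once that is seen, the rest is routine.
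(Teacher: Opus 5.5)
Your proof is correct and is essentially the paper's argument in dual form. The paper builds $X$ row by row: the rows in the $F$ positions are $\tilde U^{\top}$, the remaining rows are a basis of $\ker\tilde V^{\top}$, and a permutation places them. You instead build $X^{-1}=[\tilde V\ W][F\ E]^{\top}$ column by column. Both arguments rest on the same fact, $\operatorname{col}(\tilde V)\cap\ker\tilde U^{\top}=0$, which follows from $\tilde U^{\top}\tilde V=I_k$. The only blemish is the aborted opening paragraph with the undefined $\tilde V'$, which should be deleted.
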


\begin{proof}
Let $P \coloneqq \Lambda^{-1/2}U$ and $Q \coloneqq \Lambda^{-1/2}V$.
Then, $\rank Q^\top = \rank Q = k$ implying that $\nullity Q^\top = n- k$. 
Choose an $R \in M_{(n-k) \times n}(\mathbb{R})$ whose rows form a basis of $\ker(Q^{\top})$, and define
\begin{equation}\label{eq:X-reconstruction}
  Y \coloneqq \begin{bmatrix} R \\ P^{\top} \end{bmatrix}.
\end{equation}
Since $RQ=0$ and $P^{\T}Q=I_k$,
\begin{equation}
  Y Q = \begin{bmatrix}
      0 \\ I_k
  \end{bmatrix} = H, \text{ say }.  
\end{equation}
There exists some $n \times n$ permutation matrix $\Pi$ such that $\Pi Y Q = F$.
Putting $X \coloneqq \Pi Y$, we obtain $XQ = F$.
Assuming the invertibility of $X$, we get $X^{-1}F = Q$ giving us $V = \Lambda^{1/2} X^{-1} F$.
Also, $Y^\top H = P$ which gives $X^\top F = P$.
It immediately reveals that $U = \Lambda^{1/2} X^\top F$.

It remains to show that $X$ is invertible for which it suffices to show that $Y$ is invertible.
Let $x \in \mathbb{R}^n$ such that $Yx = 0$.
Then $Rx=0$ implying that $x$ lies in the range of $Q$.
Say, $x = Qy$ for some $y \in \mathbb{R}^k$.
Also, $Yx = 0$ implies $P^\top x = 0$.
Since $P^\top Q = I_k$, it then follows that $x = 0$ establishing the invertibility of $Y$.
\end{proof}

The normalization in \cref{lem:reconstruct-X} is automatic for a low-rank update that creates exactly $k$ zero singular values.

\begin{lemma} \label{lem:nullity-normalization}
Let $\Lambda>0$ and suppose
\begin{equation*}
    M=\Lambda-UV^{\top}
\end{equation*}
with $U, V \in M_{n \times k}(\mathbb{R})$ of full column rank.
If $\nullity M=k$, then
\begin{equation} \label{eq:nullity implies equality with I_k}
   V^{\top} \Lambda^{-1} U = I_k. 
\end{equation}
\end{lemma}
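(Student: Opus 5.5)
The plan is to factor $M$ through $\Lambda$ and reduce the statement to the nullity of a small $k\times k$ matrix. Write
\[
  M=\Lambda\bigl(I_n-\Lambda^{-1}UV^{\T}\bigr).
\]
Since $\Lambda$ is invertible, $\nullity M=\nullity\bigl(I_n-\Lambda^{-1}UV^{\T}\bigr)$. The matrix $\Lambda^{-1}UV^{\T}$ has rank at most $k$. Its nonzero eigenvalues, with multiplicities, coincide with those of the $k\times k$ matrix $V^{\T}\Lambda^{-1}U\eqqcolon G$, by the same $MN$ versus $NM$ fact used in the proof of \cref{prop:the connection}. What we actually need is a statement about kernels, not just eigenvalues, so the comparison will be done directly at that level.

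The key step is to show that $\ker(I_n-\Lambda^{-1}UV^{\T})$ is isomorphic to $\ker(I_k-G)$. Take $x$ in the first kernel. Then $x=\Lambda^{-1}U(V^{\T}x)$. Setting $y\coloneqq V^{\T}x\in\R^k$, we get $x=\Lambda^{-1}Uy$, and applying $V^{\T}$ gives $y=Gy$, so $y\in\ker(I_k-G)$.

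Conversely, take $y\in\ker(I_k-G)$ and set $x\coloneqq\Lambda^{-1}Uy$. Then $V^{\T}x=Gy=y$, and hence $x-\Lambda^{-1}UV^{\T}x=\Lambda^{-1}Uy-\Lambda^{-1}Uy=0$. These two maps are mutually inverse on the kernels. The first composite is $x\mapsto\Lambda^{-1}UV^{\T}x=x$, and the second is $y\mapsto V^{\T}\Lambda^{-1}Uy=y$. So they are linear isomorphisms, and $\nullity M=\nullity(I_k-G)$.

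To conclude, the hypothesis $\nullity M=k$ forces $\nullity(I_k-G)=k$. Since $I_k-G$ is a $k\times k$ matrix, this means $I_k-G=0$, which is exactly \eqref{eq:nullity implies equality with I_k}. Full column rank of $U$ and $V$ is not really needed for this direction. It only guarantees that the rank-$k$ update is genuine, so I will remark on this rather than use it.

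The only point needing care is the kernel isomorphism. Eigenvalue multiplicities alone give algebraic rather than geometric multiplicity, so the explicit maps $x\mapsto V^{\T}x$ and $y\mapsto\Lambda^{-1}Uy$ are what make the argument rigorous.
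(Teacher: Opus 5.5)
Your argument is correct and is essentially the paper's: the paper takes a matrix $Z$ whose columns form a basis of $\ker M$, derives $V^{\T}Z=(V^{\T}\Lambda^{-1}U)\,V^{\T}Z$, and shows that the $k\times k$ matrix $V^{\T}Z$ is invertible, which is exactly the statement that your map $x\mapsto V^{\T}x$ sends $\ker M$ injectively into $\ker(I_k-V^{\T}\Lambda^{-1}U)$. Your inverse map $y\mapsto\Lambda^{-1}Uy$ adds surjectivity and so gives the general identity $\nullity M=\nullity(I_k-V^{\T}\Lambda^{-1}U)$, which is more than the lemma needs; you are also right that full column rank of $U$ and $V$ is never used, and the paper's proof does not use it either.
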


\begin{proof}
 Since $\nullity M = k$, choose $Z \in M_{n \times k}(\mathbb{R})$ whose columns form a basis of $\ker M$.
 Note that $\rank Z = k$.
 Now, $MZ = 0$ gives 
 \begin{equation} \label{eq:LX}
   \Lambda Z = UV^\top Z.  
 \end{equation}
 Multiplying both sides of \eqref{eq:LX} on the left by $V^\top \Lambda^{-1}$, we obtain $V^\top Z = V^\top \Lambda^{-1} U V^\top Z$.
 Thus, \eqref{eq:nullity implies equality with I_k} follows if we establish the invertibility of $V^\top Z$.
 Let $x \in \mathbb{R}^k$ such that $V^\top Z x = 0$.
 This implies that $\Lambda^{-1} U V^\top Z x = 0$.
 From \eqref{eq:LX}, it follows that $Zx = 0$ ensuring $x = 0$ since $Z$ has full column rank. Hence, $V^\top Z$ is invertible.
\end{proof}

\section{The converse of the symplectic interlacing theorem} \label{sec:main_result}
    Krbek, Tyc, and Vlach \cite{KrbekTycVlach2014} presented numerical evidence in support of the converse of the symplectic interlacing theorem in the one-mode case, i.e., $k=1$.  
    In particular, their experiments with point transformations suggest that this restricted family of symplectic transformations may already be sufficient to realize the global and local symplectic spectra satisfying the interlacing inequalities \eqref{eq:gaussian-k-ineq}. 
    The aim of this section is to establish the converse of the symplectic interlacing theorem in full generality.
    Moreover, we also show that the realizing matrix may always be chosen on the point-symplectic orbit \eqref{eq:GammaX}.

     We first recall a result on the interlacing inequalities for singular values given in \cite[Theorem~3.1]{ChuChu2006} that will be important in proving the main result.
     Although their result is stated for rectangular real matrices, we will be contented with the square case for our discourse. 

    \begin{theorem} [\cite{ChuChu2006}] \label{thm:paramount utility}
    Consider $\alpha_1 \geq \cdots \geq \alpha_n \geq 0$ and $\beta_1 \geq \cdots \geq \beta_n \geq 0$.
    Let $A \in M_n(\mathbb{R})$ having $\sigma(A) = ( \alpha_n, \ldots, \alpha_1 )$.
    For an integer $0 < k \leq n$, there exists $G \in M_n(\mathbb{R})$ such that $\emph{rank}(G) \leq k$ and $\sigma(A + G) = ( \beta_n, \ldots , \beta_1 )$ if and only if for each $1 \leq i \leq n - k$
    \begin{equation}
        \beta_{i + k} \leq \alpha_i \text{ and } \alpha_{i + k} \leq \beta_i. 
    \end{equation}
    \end{theorem}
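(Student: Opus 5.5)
We prove the two directions separately. Necessity follows from Weyl's inequalities, and sufficiency comes from an induction on $k$ that reduces to a rank-one realization built by bordering. Throughout we index singular values non-increasingly, $s_1(\cdot)\geq s_2(\cdot)\geq\cdots$, so $\alpha_i=s_i(A)$, and we set $\alpha_i=\beta_i=+\infty$ for $i\leq 0$ and $\alpha_i=\beta_i=0$ for $i>n$.

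\emph{Necessity.} Weyl's inequality for singular values gives $s_{i+j-1}(Y+Z)\leq s_i(Y)+s_j(Z)$. We apply it with $Y=A$, $Z=G$ and $j=k+1$. Since $\rank G\leq k$ we have $s_{k+1}(G)=0$, so $\beta_{i+k}\leq\alpha_i$. Applying the same inequality to $A=(A+G)+(-G)$ gives $\alpha_{i+k}\leq\beta_i$.

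\emph{Sufficiency: reduction to $k=1$.} We induct on $k$. Rank is subadditive, so it suffices to find an intermediate sequence $\gamma$ that satisfies the $1$-step inequalities with $\alpha$ and the $(k-1)$-step inequalities with $\beta$. We then apply the rank-one case to pass from $\alpha$ to $\gamma$, and the induction hypothesis to pass from $\gamma$ to $\beta$. We take
\[
\gamma_i\coloneqq\max(\alpha_{i+1},\beta_{i+k-1}),
\]
which is clearly non-increasing. The two lower bounds $\gamma_i\geq\alpha_{i+1}$ and $\gamma_i\geq\beta_{i+k-1}$ hold by definition. The two upper bounds $\gamma_i\leq\alpha_{i-1}$ and $\gamma_i\leq\beta_{i-k+1}$ reduce to four comparisons:
\begin{itemize}
\item $\alpha_{i+1}\leq\alpha_{i-1}$ by monotonicity;
\item $\beta_{i+k-1}\leq\alpha_{i-1}$ by the hypothesis $\beta_{(i-1)+k}\leq\alpha_{i-1}$;
\item $\alpha_{i+1}\leq\beta_{i-k+1}$ by the hypothesis $\alpha_{(i-k+1)+k}\leq\beta_{i-k+1}$;
\item $\beta_{i+k-1}\leq\beta_{i-k+1}$ by monotonicity.
\end{itemize}
The boundary cases are absorbed by the conventions above.

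\emph{Sufficiency: the rank-one case.} We use a common row-deleted submatrix. Put $\sigma'_i\coloneqq\max(\alpha_{i+1},\beta_{i+1})$ for $1\leq i\leq n-1$. The $1$-step hypotheses give $\sigma'_i\leq\min(\alpha_i,\beta_i)$, so $\sigma'$ interlaces both $\alpha$ and $\beta$. The plan has three steps.
\begin{enumerate}
\item Take an $(n-1)\times n$ matrix $N$ with singular values $\sigma'$.
\item Find rows $a,b\in\R^{1\times n}$ such that $\begin{bsmallmatrix}N\\a\end{bsmallmatrix}$ has singular values $\alpha$ and $\begin{bsmallmatrix}N\\b\end{bsmallmatrix}$ has singular values $\beta$.
\item Conclude. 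The difference of these two matrices has rank at most one. By the SVD, $\begin{bsmallmatrix}N\\a\end{bsmallmatrix}=WAW'$ for orthogonal $W,W'$. Conjugating the rank-one difference back by $W,W'$ gives the required $G$.
\end{enumerate}

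The main obstacle is the converse bordering statement in step 2 (a theorem of Thompson). I would prove it through Gram matrices:
\[
\begin{bsmallmatrix}N\\r\end{bsmallmatrix}^{\T}\begin{bsmallmatrix}N\\r\end{bsmallmatrix}=N^{\T}N+r^{\T}r.
\]
This is a positive semidefinite rank-one update of $N^{\T}N$, whose eigenvalues are $\sigma'^2_1,\ldots,\sigma'^2_{n-1}$ together with $0$. The classical converse for positive rank-one eigenvalue interlacing applies: whenever $\alpha_1^2\geq\sigma'^2_1\geq\alpha_2^2\geq\cdots\geq\sigma'^2_{n-1}\geq\alpha_n^2\geq 0$, one can solve for the components of $r$ in the eigenbasis of $N^{\T}N$ using the secular equation. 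The squared components come out as explicit non-negative products, which are non-negative precisely because of interlacing, and coincident values are handled by a limiting or deflation argument. Applying this once with target $\alpha$ and once with target $\beta$ produces $a$ and $b$.
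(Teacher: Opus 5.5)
The paper does not prove this statement. It quotes it from Chu and Chu \cite{ChuChu2006} (Theorem~3.1 there, stated for rectangular matrices) and uses it as a black box. Only the sufficiency direction in the square case is actually invoked, in the proof of Theorem~\ref{thm:main-k}.

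Your proposal is a correct, self-contained proof of the full statement:
\begin{itemize}
\item Necessity via $s_{i+j-1}(Y+Z)\leq s_i(Y)+s_j(Z)$ with $j=k+1$ is right.
\item The intermediate sequence $\gamma_i=\max(\alpha_{i+1},\beta_{i+k-1})$ satisfies the $1$-step inequalities against $\alpha$ and the $(k-1)$-step inequalities against $\beta$. Your conventions correctly absorb the boundary indices (e.g.\ $\gamma_n=0$ for $k\geq 2$).
\item The rank-one case goes through a common row-deleted matrix $N$ with singular values $\sigma'_i=\max(\alpha_{i+1},\beta_{i+1})$. Via $N^{\T}N+r^{\T}r$, it reduces to the converse of eigenvalue interlacing for positive semidefinite rank-one updates. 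With $\lambda=(\sigma_1'^2,\ldots,\sigma_{n-1}'^2,0)$ the eigenvalues of $N^{\T}N$, the secular-equation values $r_j^2=-\prod_i(\lambda_j-\alpha_i^2)/\prod_{l\neq j}(\lambda_j-\lambda_l)$ are non-negative by interlacing.
\end{itemize}

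The one step worth writing out more carefully is the limiting argument for repeated $\lambda_j$, which genuinely occurs (e.g.\ when $\sigma'_{n-1}=0$, as happens at intermediate stages of your induction). It works because the trace forces $\|r\|^2=\sum_i\alpha_i^2-\sum_i\sigma_i'^2$ along the approximating strictly-interlacing data. That bound gives the compactness needed to extract a limiting $r$ for the unperturbed $N^{\T}N$.

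On what each route buys: the citation is short and covers rectangular matrices, which the paper does not need. Your argument makes the paper self-contained, and it is constructive for every $k$: $G$ becomes a sum of $k$ rank-one pieces, each obtained from the secular-equation formula along an explicit chain of intermediate spectra $\alpha\to\gamma^{(1)}\to\cdots\to\beta$. Combined with Lemma~\ref{lem:reconstruct-X}, this would extend the explicit state-preparation remark in Section~\ref{sec:gaussian-application} to arbitrary $k$; the paper only claims it for $k=1$.
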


    We now present the main result of the paper.
    
    \begin{theorem} \label{thm:main-k}
    Let $1 \leq k < n$ and $m \coloneqq n-k$, and
    \begin{equation*}
      0 < \mu_1 \leq \cdots \leq \mu_n,
      \qquad
      0 < d_1 \leq \cdots \leq d_m.   
    \end{equation*}
    satisfying the interlacing inequalities \eqref{eq:gaussian-k-ineq}.
    Then, there exists a $2n \times 2n$ real symmetric positive definite matrix $A$ with symplectic spectrum $\mu$ and a $2m \times 2m$ $s$-principal submatrix of $A$ with symplectic spectrum $d$.
    Moreover, every admissible pair is realized on a point-symplectic orbit \eqref{eq:GammaX}.
    \end{theorem}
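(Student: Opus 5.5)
We will work on the point-symplectic orbit of $\Lambda\oplus\Lambda$, with $\Lambda\coloneqq\diag(\mu_1,\ldots,\mu_n)$, and discard the last $k$ indices, so that $F=[e_{m+1}\ \cdots\ e_n]$. By \cref{prop:the connection}, \cref{lem:nullity-normalization} and \cref{lem:reconstruct-X}, the problem reduces to a singular value problem. It suffices to find $U,V\in M_{n\times k}(\R)$ of full column rank such that $M\coloneqq\Lambda-UV^{\T}$ has singular values $(0,\ldots,0,d_1,\ldots,d_m)$, with exactly $k$ zeros. Given such $U,V$, we have $\nullity M=k$, so \cref{lem:nullity-normalization} yields $V^{\T}\Lambda^{-1}U=I_k$. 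Then \cref{lem:reconstruct-X} produces $X\in\mathrm{GL}_n(\R)$ with $U=\Lambda^{1/2}X^{\T}F$ and $V=\Lambda^{1/2}X^{-1}F$. By \cref{prop:the connection}, the nonzero singular values of $M=M_X$ are then the symplectic eigenvalues of the retained $s$-principal submatrix $B_X$ of $A_X$. Finally, $A_X$ is symmetric positive definite with $\sspec(A_X)=\mu$.

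To produce $M$, we apply \cref{thm:paramount utility} with $A=\Lambda$. Its singular values in non-increasing order are $\alpha_i=\mu_{n+1-i}$. The target, in non-increasing order, is $\beta_i=d_{m+1-i}$ for $1\le i\le m$ and $\beta_i=0$ for $m<i\le n$. We must verify, for $1\le i\le n-k=m$, that $\beta_{i+k}\le\alpha_i$ and $\alpha_{i+k}\le\beta_i$.

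The second condition reads $\mu_{m+1-i}\le d_{m+1-i}$, i.e.\ $\mu_j\le d_j$ for all $1\le j\le m$. This is exactly the lower half of \eqref{eq:gaussian-k-ineq}.

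For the first condition, if $i+k>m$ then $\beta_{i+k}=0\le\alpha_i$, which holds trivially. Otherwise $i\le m-k=n-2k$, and the condition reads $d_{m+1-i-k}\le\mu_{n+1-i}$. Setting $j=n+1-2k-i$, this becomes $d_j\le\mu_{j+2k}$ for $1\le j\le n-2k$, which is the upper half of \eqref{eq:gaussian-k-ineq}.

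Hence there is $G$ with $\rank G\le k$ and $\sigma(\Lambda+G)=\beta$. In particular $\Lambda+G$ has exactly $k$ zero singular values, because the $d_j$ are positive.

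Since $\Lambda$ is invertible and $\nullity(\Lambda+G)=k$, we have $\rank G\ge k$, hence $\rank G=k$. Factor $-G=UV^{\T}$ with $U,V\in M_{n\times k}(\R)$ of full column rank. This gives the required $M=\Lambda-UV^{\T}$, and the reduction in the first paragraph completes the construction.

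The main obstacle is confirming that the index translation matches \eqref{eq:gaussian-k-ineq} exactly, including the degenerate case $n-2k\le0$. In that case the first family of conditions in \cref{thm:paramount utility} is vacuous, since $i+k>m$ always. A second point requiring care is the rank/nullity bookkeeping that upgrades $\rank G\le k$ to a full-column-rank factorization. This is the step where positivity of the $d_j$ and of $\Lambda$ is essential, and the rest consists of applying the lemmas directly.
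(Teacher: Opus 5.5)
Your proposal is correct and follows the paper's proof essentially step for step: you apply the Chu--Chu theorem to $\Lambda$ with target singular values $(d_m,\ldots,d_1,0,\ldots,0)$, use $\nullity(\Lambda+G)=k$ to force $\rank G=k$, factor $G=-UV^{\T}$, and then invoke the normalization lemma, the reconstruction lemma, and the proposition relating $M_X$ to $B_X$. The only differences are that you order $\Lambda$ increasingly rather than decreasingly, which does not change its singular values, and that you spell out the index translation (including the case $n-2k\le 0$), which the paper only asserts.
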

    
\begin{proof}
We reverse the orderings of $\mu$ and $d$ and set
\begin{equation*}
  \alpha \coloneqq (\mu_n,\ldots,\mu_1)
  \text{ and }
  \beta \coloneqq(d_m,\ldots,d_1,\underbrace{0,\ldots,0}_{k\text{ times}}).  
\end{equation*}
With the reversed ordering, the inequalities \eqref{eq:gaussian-k-ineq} are precisely
\begin{equation*}
    \alpha_i \geq \beta_{i+k},
  \qquad
  \beta_i \geq \alpha_{i+k},
  \qquad
  1 \leq i \leq n-k.  
\end{equation*}

Let $\Lambda\coloneqq\diag(\alpha_1,\ldots,\alpha_n)$.
By \cref{thm:paramount utility}, there exists $G \in M_n(\mathbb{R})$ with $\rank G \leq k$ such that
\begin{equation*}
    M \coloneqq \Lambda + G
\end{equation*}
has singular values $\beta$.  
It follows that $M$ has nullity $k$.
Since $\Lambda$ is invertible, $\ker (\Lambda + G) \subseteq \operatorname{range}(\Lambda^{-1}G)$ and thus
\begin{equation*}
   k =  \nullity(\Lambda + G)\leq\rank G
\end{equation*}
forcing $\rank G=k$.
Now, factor $G \coloneqq -UV^{\top}$ with $U,V \in M_{n \times k}(\mathbb{R})$ having full column rank.
By \cref{lem:nullity-normalization},
\[
  V^{\top} \Lambda^{-1}U=I_k.
\]
Now Lemma \ref{lem:reconstruct-X} gives $X\in\mathrm{GL}_n(\R)$ realizing $U,V$ relative to $\Lambda$ as in \eqref{eq:UV}.
Proposition \ref{prop:the connection} then identifies the non-zero singular values of $M$ with the symplectic eigenvalues of the retained \emph{s}-principal submatrix of
\begin{equation*}
   A_X=(X \Lambda X^{\T})\oplus
  (X^{-\top} \Lambda X^{-1}). 
\end{equation*}
These are exactly $d_1,\ldots,d_m$, while the ordered global symplectic spectrum is $\mu$.
\end{proof}

\section{Application to the Gaussian quantum marginal problem}\label{sec:gaussian-application}

Consider an $n$-mode bosonic continuous-variable system with canonical quadratures ordered as
\[
  (\widehat q_1,\ldots,\widehat q_n,\widehat p_1,\ldots,\widehat p_n).
\]
For a centered Gaussian state, the state is specified by its symmetric covariance matrix $A \in  M_{2n}(\mathbb{R})$.  
In the convention used here, the Robertson--Schr\"odinger uncertainty relation is
\begin{equation}\label{eq:uncertainty}
  A+iJ\geq0,
\end{equation}
and this is equivalent to all symplectic eigenvalues of $A$ being at least one.  A partial trace over a set of modes deletes the corresponding canonical coordinate pairs, so the covariance matrix of the reduced Gaussian state is exactly an $s$-principal submatrix of $A$; see \cite{EisertTycRudolphSanders2008,KrbekTycVlach2014,Weedbrook2012,Serafini2023}.

    Our main result, combined with the symplectic interlacing theorem of Krbek, Tyc, and Vlach, thus settles the Gaussian quantum marginal problem of admissibility of global-versus-local symplectic spectra of Gaussian states and their marginals.  
    \begin{corollary}[Gaussian global-versus-subsystem marginal criterion]\label{cor:gaussian}
    Let $1\leq k<n$.  
    Let
    \[
      1\leq\mu_1\leq\cdots\leq\mu_n,
      \qquad
      1 \leq d_1\leq\cdots\leq d_{n-k}.
    \]
    There exists an $n$-mode Gaussian state with global symplectic spectrum $\mu$ whose retained $(n-k)$-mode subsystem has symplectic spectrum $d$ if and only if the interlacing inequalities \eqref{eq:gaussian-k-ineq} are satisified.
    \end{corollary}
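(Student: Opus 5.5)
The corollary has two directions, and I would handle them separately.

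\emph{Necessity.} Suppose an $n$-mode Gaussian state with covariance matrix $A$ has global symplectic spectrum $\mu$. Suppose further that its retained $(n-k)$-mode subsystem has symplectic spectrum $d$. By the discussion preceding the corollary, the reduced covariance matrix is exactly an $s$-principal submatrix $A_I$ of $A$. The matrix $A$ is positive definite: the uncertainty relation \eqref{eq:uncertainty} forces all symplectic eigenvalues to be at least one, so the Williamson form \eqref{eq:Williamson} is positive definite. Hence the symplectic interlacing theorem of Krbek, Tyc, and Vlach \cite{KrbekTycVlach2014} applies directly and yields \eqref{eq:gaussian-k-ineq}.

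\emph{Sufficiency.} Suppose $1\le\mu_1\le\cdots\le\mu_n$ and $1\le d_1\le\cdots\le d_{n-k}$ satisfy \eqref{eq:gaussian-k-ineq}.
\begin{enumerate}[label=(\roman*)]
\item Since all entries are positive, \cref{thm:main-k} applies. It produces $X\in\mathrm{GL}_n(\R)$ such that $A_X=(X\Lambda X^{\T})\oplus(X^{-\T}\Lambda X^{-1})$ has $\sspec(A_X)=\mu$, and an $s$-principal submatrix $B_X$, obtained by deleting $k$ index pairs, has $\sspec(B_X)=d$.
\item Because every $\mu_j\ge 1$, the equivalence stated after \eqref{eq:uncertainty} shows that $A_X+iJ\ge0$. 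I would verify this directly rather than only cite it: write $A_X=\mathcal S_X(\Lambda\oplus\Lambda)\mathcal S_X^{\T}$ and use $\mathcal S_X J\mathcal S_X^{\T}=J$, which holds because $\mathcal S_X^{\T}$ is also symplectic. This gives $A_X+iJ=\mathcal S_X\bigl((\Lambda\oplus\Lambda)+iJ\bigr)\mathcal S_X^{\T}$. The middle factor decomposes into $2\times2$ blocks $\begin{bsmallmatrix}\mu_j& i\\ -i&\mu_j\end{bsmallmatrix}$, each with eigenvalues $\mu_j\pm1\ge0$.
\item Thus $A_X$ is a valid covariance matrix of a centered Gaussian state. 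Its reduced state on the retained modes has covariance $B_X$, and $\sspec(B_X)=d$ as required.
\end{enumerate}
The subsystem condition $d_j\ge1$ is automatic here, since any marginal of a physical state is physical. It is also implied by $d_j\ge\mu_j\ge1$.

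There is essentially no obstacle. The only point needing care is confirming that the point-symplectic realization from \cref{thm:main-k} satisfies the physicality constraint \eqref{eq:uncertainty}. This follows from symplectic invariance of $A+iJ\ge0$ together with $\mu_j\ge1$, as in step (ii).
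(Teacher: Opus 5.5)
Your proof is correct and follows the paper's argument: necessity is the Krbek--Tyc--Vlach interlacing theorem, and sufficiency comes from \cref{thm:main-k} together with $A_X+iJ\ge0$. The one difference is that you verify $A_X+iJ\ge0$ explicitly, via $A_X+iJ=\mathcal S_X\bigl((\Lambda\oplus\Lambda)+iJ\bigr)\mathcal S_X^{\T}$ and the $2\times2$ blocks with eigenvalues $\mu_j\pm1$, where the paper simply cites Williamson's criterion; also, your derivation of $A>0$ from the symplectic eigenvalues in the necessity direction is circular (Williamson's theorem already presupposes $A>0$), but harmlessly so, since $A+iJ\ge0$ directly forces $A>0$ and the hypothesis that $A$ has a symplectic spectrum presupposes it anyway.
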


\begin{proof}
Necessity is the symplectic interlacing theorem.  For sufficiency, \cref{thm:main-k} gives a positive definite matrix $A_X$ with the prescribed global and local symplectic spectra.  Because the global symplectic eigenvalues satisfy $\mu_j\geq1$, Williamson's criterion implies $A_X+iJ\geq0$.  Hence $A_X$ is a physical Gaussian covariance matrix, and its $s$-principal submatrix is the covariance matrix of the reduced Gaussian state.
\end{proof}

Corollary~\ref{cor:gaussian} also admits a direct physical interpretation for multimode electromagnetic fields. In the Williamson normal form, the symplectic eigenvalues of a Gaussian covariance matrix determine the mean photon numbers $N_j$ of the corresponding thermal normal modes through $\mu_j=2N_j+1$, and similarly $d_j=2N_j'+1$ for the reduced subsystem. 
Hence, our result gives a complete characterization of the possible relations between the average photon numbers $(N_1,\ldots,N_n)$ of the global system and the average photon numbers $(N_1',\ldots,N_{n-k}')$ of an $(n-k)$-mode subsystem. 

    We also report that there is an explicit way of preparing a global Gaussian state for a given set of global and local symplectic spectral in the base case \(k=1\) of one-mode.
    This links directly to the explicit construction of the rank-one matrix $G$ in Theorem~\ref{thm:paramount utility} given by Chu and Chu \cite{ChuChu2006}.
    By Lemma~\ref{lem:reconstruct-X}, this leads to an explicit construction of $X$ in Theorem~\ref{thm:main-k}.
     Starting from the Williamson covariance matrix \(\Lambda\oplus\Lambda\), which represents a product of thermal modes with the prescribed global symplectic eigenvalues, one then applies the Gaussian unitary corresponding to the point-symplectic transformation $X\oplus X^{-\mathsf T}$.
    The resulting covariance matrix is
    \[
        A_X=(X\Lambda X^{\mathsf T})
          \oplus
          (X^{-\mathsf T}\Lambda X^{-1}),
    \]
    and tracing out the selected mode leaves an \((n-1)\)-mode Gaussian state with the prescribed symplectic spectrum \(d\). 

\section*{Acknowledgement}
    PG acknowledges grant ANRF SERB CRG/2023/000595, and
    HKM acknowledges FRS Project No.~MISC~0147 for financial supports.
    Temjensangba thanks Nagaland University for granting study leave with pay.

\section*{AI declaration}
     The authors independently identified the research problem and made initial progress toward its solution. 
     A draft of our early progress was supplied to OpenAI's ChatGPT (GPT-5.6 Sol) which helped develop the main technical parts of the paper via extensive interaction with the AI tool.
     In particular, the main idea of linking the symplectic spectrum of a $2(n-k) \times 2(n-k)$ $s$-principal submatrix with singular values of a rank $k$ perturbed matrix was suggested by AI.
     The authors then independently polished, simplified, and verified all the mathematical details.
    The authors take full responsibility for the correctness, accuracy, and content of the manuscript.

\bibliographystyle{plainnat}
\bibliography{references}

\end{document}